\renewcommand{\subset}{\subseteq}
\newtheorem{theorem}{Theorem}[section]
\newtheorem{lemma}[theorem]{Lemma}
\newtheorem{proposition}[theorem]{Proposition}
\newtheorem{remark}[theorem]{Remark}
\newtheorem{conjecture}[theorem]{Conjecture}
\newtheorem{definition}[theorem]{Definition}
\newtheorem{corollary}[theorem]{Corollary}
\author{Maxim Gurevich}
\address{Department of Mathematics, Weizmann Institute of Science, Rehovot 7610001, Israel}
\email{max.gurevich@weizmann.ac.il}
\date{\today}
\newcommand{\gotM}{\mathfrak{m}}
\newcommand{\gotN}{\mathfrak{n}}
\newcommand{\wid}{\omega}
\DeclareMathOperator{\irr}{Irr}
\DeclareMathOperator{\seg}{Seg}
\DeclareMathOperator{\supp}{supp}
\begin{document}

\title{A filtration on rings of representations of non-archimedean $GL_n$}

\begin{abstract}
Let $F$ be a $p$-adic field. Let $\mathcal{R}$ be the Grothendieck ring of complex smooth finite-length representations of the groups $\{GL_n(F)\}_{n=0}^\infty$ taken together, with multiplication defined in the sense of parabolic induction. We introduce a width invariant for elements of $\mathcal{R}$ and show that it gives an increasing filtration on the ring. Irreducible representations of width $1$ are precisely those known as ladder representations. 

We thus obtain a necessary condition on irreducible factors of a product of two ladder representations. For such a product we further establish a multiplicity-one phenomenon, which was previously observed in special cases.

\end{abstract}

\maketitle

\section{Introduction}
Let $F$ be a $p$-adic field. Let $\mathcal{R}_n$ be the Grothendieck group associated with the category of complex smooth finite-length representations of the group $GL_n(F)$. Given two smooth representations $\pi_i$ of $GL_{n_i}$ ($i=1,2$), $\pi_1\times \pi_2$ is defined as the parabolic induction of $\pi_1\otimes \pi_2$ to $GL_{n_1+n_2}(F)$ from a block upper-triangular maximal Levi subgroup. This product operation equips the group $\mathcal{R} = \oplus_{n\geq 0} \mathcal{R}_n$ with a structure of a commutative ring.

This ring is long known (see \cite{Zel}) to be a polynomial ring over $\mathbb{Z}$ in infinitely many variables. One way to observe this is by recalling the Langlands classification, which gives a bijection from the irreducible representations $\irr = \cup_{n\geq 0} \irr(GL_n(F))$ to the so-called standard representations. The collection of standard representations is closed under multiplication and gives a basis to $\mathcal{R}$ as a free abelian group. In particular, the essentially square-integrable (segment representations) elements of $\irr$ freely generate $\mathcal{R}$ as a polynomial ring over $\mathbb{Z}$.

Yet, the collection $\irr$ itself gives a different basis to $\mathcal{R}$ as a free abelian group. Our note joins an effort to describe the multiplicative structure of $\mathcal{R}$ in terms of this natural basis. 

It is known (see \cite{Hender} for an efficient description) that the transition matrix between the two mentioned above bases, i.e.~standard and irreducible representations, is given by values of Kazhdan-Lusztig polynomials for symmetric groups. Thus, given $\pi_1,\pi_2\in \irr$, the irreducible factors of $\pi_1\times \pi_2$ can in principle be determined by computing these polynomials. 

However, there is hope that the complexity involved in such a computation may be overcome by applying more direct methods on the problem. For example, irreducibility of $\pi_1\times \pi_2$ for unitarizable representations is a classical result due to Bernstein \cite{bern}. The results of Lapid and M\'{i}nguez \cite{LM2} further deal with this question and supply direct combinatorial criteria for irreducibility in some cases.

One class of irreducible representations which was shown to be susceptible to such methods is that of ladder representations, introduced in \cite{LM}. In \cite{dandan}, an interpretation of the role ladder representations as analogous to that of finite-dimensional representations in category $\mathcal{O}$ was given. In this work we present a new invariant of finite-length representations which attempts to quantify the distance from the well-behaved properties of ladder representations.

Given a supercuspidal $\rho\in \irr$, we write $\irr_{[\rho]}$ for the set of elements of $\irr$ whose supercuspidal support consists of representations on the supercuspidal line $\{\rho\otimes |\det|^n\}_{n\in\mathbb{Z}}$. Let $\mathcal{R}_{[\rho]}$ be the group (which is a subring) generated by $\irr_{[\rho]}$ in $\mathcal{R}$. It is enough to study the multiplicative structure of $\mathcal{R}_{[\rho]}$ 
for a fixed $\rho$, since any $\pi\in\irr$ is uniquely decomposed as $\pi= \pi_1\times\cdots\times \pi_k$, where $\pi_i \in \irr_{\rho_i}$, for $\{\rho_i\}$ that belong to disjoint supercuspidal lines.

Recall that the Zelevinski classification\footnote{ In what follows we work with the Langlands classification, but we refer to it in terms of Zelevinski's multisegments. The classifications are dual to each other, as explained for example in \cite{LM2}.} \cite{Zel} of $\irr_{[\rho]}$ describes each irreducible representation as a multiset of segments, i.e.~intervals of the form $[a,b],\,a,b\in\mathbb{Z}$. A ladder representation would then be given by a set of the form $\{[a_i,b_i]\}_{i=1}^k$, with $a_1<\ldots<a_k$ and $b_1<\ldots<b_k$.

For any $\pi\in \irr_{[\rho]}$, we call the \textit{width} $\wid(\pi)$ of $\pi$ to be the minimal number of ladder sets of segments required to cover disjointly the multiset attached to $\pi$. Our first result claims that $\wid$ serves as a length function for an increasing filtration on the ring $\mathcal{R}_{[\rho]}$ in the following sense.

\begin{theorem}\label{main-1}
Let $\rho\in \irr$ be a supercuspidal representation. For $n\in\mathbb{N}$, let $\mathcal{M}_n$ be the subgroup generated in $\mathcal{R}_{[\rho]}$ by all $\pi\in \irr_{[\rho]}$ with $\wid(\pi)\leq n$. Then,
\[
\mathcal{M}_i\cdot \mathcal{M}_j\subset \mathcal{M}_{i+j},\qquad\forall i,j\in\mathbb{N}.
\]
\end{theorem}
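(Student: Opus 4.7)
The proof reduces to showing that for any two irreducible representations $\pi_1=L(\mathfrak{m}_1)$ and $\pi_2=L(\mathfrak{m}_2)$ and any irreducible factor $\sigma=L(\mathfrak{n})$ of $\pi_1\times\pi_2$, the inequality $\omega(\sigma)\le\omega(\pi_1)+\omega(\pi_2)$ holds. As a preliminary reformulation, I would invoke Dilworth/Mirsky duality to identify $\omega(\mathfrak{m})$ with the maximum size of an antichain in $\mathfrak{m}$, where each segment $[a,b]$ is viewed as a point in $\mathbb{Z}^2$ under the strict product order; this dual description makes local perturbations of $\mathfrak{m}$ easier to track.

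The natural plan is to pass to an auxiliary filtration. Let $\mathcal{M}'_k\subseteq\mathcal{R}_{[\rho]}$ denote the subgroup generated by all products $L(\mathfrak{n}_1)\times\cdots\times L(\mathfrak{n}_m)$ with $m\le k$ and each $L(\mathfrak{n}_j)$ a ladder representation. By construction $\mathcal{M}'_i\cdot\mathcal{M}'_j\subseteq\mathcal{M}'_{i+j}$, so the theorem follows at once from the equality $\mathcal{M}_k=\mathcal{M}'_k$. For the inclusion $\mathcal{M}_k\subseteq\mathcal{M}'_k$, I would proceed by induction on the Zelevinsky partial order on multisegments: if $\pi=L(\mathfrak{m})$ has width $k$ and $\mathfrak{m}=\mathfrak{n}_1+\cdots+\mathfrak{n}_k$ is a minimal ladder partition, the product $L(\mathfrak{n}_1)\times\cdots\times L(\mathfrak{n}_k)$ decomposes in the Grothendieck ring as $L(\mathfrak{m})$ plus a $\mathbb{Z}$-combination of terms $L(\mathfrak{m}')$ with $\mathfrak{m}'<\mathfrak{m}$, and provided we control the widths of these lower terms, we deduce $L(\mathfrak{m})\in\mathcal{M}'_k$.

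The crux, and the main obstacle, is the reverse inclusion $\mathcal{M}'_k\subseteq\mathcal{M}_k$, equivalent to the assertion that every irreducible factor of a product of $k$ ladder representations has width at most $k$. The difficulty is that width is \emph{not} monotone under a single elementary Zelevinsky move: replacing a linked pair $\{\Delta,\Delta'\}$ by $\{\Delta\cap\Delta',\Delta\cup\Delta'\}$ can turn a two-segment ladder into a nested pair, jumping width from $1$ to $2$. Hence one cannot simply bound the widths of factors by $\omega(\mathfrak{n}_1+\cdots+\mathfrak{n}_k)$. I would attack the claim by induction on $k$, handling the base case $k=2$ via the Lapid--M\'{i}nguez combinatorial description of factors of a product of two ladders, and then leveraging associativity of the product together with Jacquet module (or Bernstein--Zelevinsky derivative) techniques in the induction step. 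An alternative avenue would be to seek an RSK-type bijection that reads off the ladder-partition count directly from the Zelevinsky combinatorics of the multisegment~$\mathfrak{n}$.
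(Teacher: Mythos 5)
Your reduction is the right one, and your use of Dilworth duality to read $\wid$ as a maximal antichain of nested segments matches the paper's Lemma~\ref{dilw}. But the proposal stops exactly where the proof has to begin: the assertion you correctly single out as the crux --- every irreducible subquotient of a product of $k$ ladder representations has width at most $k$ (Proposition~\ref{main-prop} in the paper) --- is not proved; you only list possible attack routes. Moreover, the routes you sketch are doubtful as stated: Lapid--M\'inguez give irreducibility criteria, not a description of the composition factors of a product of two ladders (the paper emphasizes that such a description is not known in general, cf.\ Conjecture~\ref{conje}), so even your base case $k=2$ has no available input; and the induction step ``via associativity and Jacquet modules'' is left entirely open, with the non-monotonicity of width under elementary Zelevinsky moves --- which you yourself point out --- being precisely the obstruction that a correct argument must circumvent. (Your auxiliary filtration $\mathcal{M}'_k$ also requires the inclusion $\mathcal{M}_k\subseteq\mathcal{M}'_k$, which runs into the same unresolved control of widths of lower terms; the paper avoids this by working with subquotients rather than identities in the Grothendieck group, using that $L(\gotM_1+\gotM_2)$ occurs in $L(\gotM_1)\times L(\gotM_2)$, Remark~\ref{rmk}.)

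The paper's actual mechanism, absent from your proposal, is a Jacquet-module reformulation of width. Define $j(\pi)$ as the maximal multiplicity of the minimal cuspidal exponent in the support of an irreducible Jacquet module component of $\pi$. One shows $\wid(\pi)\le j(\pi)$ (Lemma~\ref{first-ineq}) by taking a maximal nested chain $\Delta_1\subseteq\cdots\subseteq\Delta_k$ inside $\gotM$ and exhibiting, via a suitable factorization of the co-standard module $\tilde{\lambda}(\gotM)$ and the known Jacquet modules of segment representations, an explicit Jacquet component whose support contains the point $a_k$ with multiplicity $k$. Then the theorem of Kret--Lapid, that all Jacquet module components of a ladder are again ladders (so each contributes its minimal exponent with multiplicity one), combined with the Geometric Lemma, gives $j(\pi_1\times\cdots\times\pi_k)\le k$ for ladders $\pi_i$; together these yield the crucial bound, and the theorem follows since $\pi_1\times\pi_2$ is a subquotient of a product of $\wid(\pi_1)+\wid(\pi_2)$ ladders. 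Without this (or some substitute for it), your argument does not yet establish the statement.
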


In particular, Theorem \ref{main-1} gives for any $\pi_1,\pi_2\in \irr$ a necessary condition for the occurrence of irreducible representations in the composition series of $\pi_1\times \pi_2$. 

The problem of finding a general rule in terms of multisegments for the precise composition series of $\pi_1\times\pi_2$ appears to be far from reach (as mentioned, even determining irreducibility proved to be difficult). Nevertheless, on the first level of our filtration, namely when $\pi_1,\pi_2$ are ladder representations, this task was shown to be feasible by the works of Tadic \cite{tadic-speh} and Leclerc \cite{lecl}. These provide such formulas for a product of ladder representations taken from certain subclasses. 

Our second result establishes a general phenomenon previously observed for those subclasses. 

\begin{theorem}\label{main-2}
For any ladder representations $\pi_1\in \irr(G_{n_1}),\,\pi_2\in \irr(G_{n_2})$, the isomorphism classes of irreducible subquotients of $\pi_1\times\pi_2$ are all of multiplicity one. 

In other words, in the group $\mathcal{R}_{n_1+n_2}$, we have $[\pi_1 \times \pi_2] = [\sigma_1]\, + \,\ldots +\, [\sigma_k]$, for $\sigma_1,\ldots,\sigma_k\in \irr(G_{n_1+n_2})$, with $\sigma_i\ncong \sigma_j$ for $i\neq j$.

\end{theorem}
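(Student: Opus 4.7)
The plan is to deduce the multiplicity-one statement from a multiplicity-freeness property of Jacquet modules of ladder representations, transported through the Bernstein--Zelevinsky geometric lemma.

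I would begin by invoking Theorem \ref{main-1}: any irreducible subquotient $\sigma$ of $\pi_1\times\pi_2$ satisfies $\wid(\sigma)\le 2$, so its associated multisegment decomposes as a disjoint union of two ladders. This combinatorial constraint already bounds the possible constituents and, more importantly, suggests that they can be recovered from the data of how Jacquet modules of $\pi_1$ and $\pi_2$ recombine.

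The central input is that Jacquet modules of ladder representations are semisimple and multiplicity-free, with constituents indexed explicitly by ``cuts'' of the ladder (this is in the spirit of \cite{LM}). Applying the geometric lemma to $r_P(\pi_1\times\pi_2)$ for a well-chosen parabolic $P$, the successive quotients are parabolically induced products of ladder Jacquet modules, hence themselves expressible in terms of ladders. I would choose $P$ large enough that the induced products on each graded piece are irreducible (or at least sums of irreducibles with disjoint cuspidal supports across different pieces), so that $r_P(\pi_1\times\pi_2)$ itself is multiplicity-free in the Grothendieck group.

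To transfer the conclusion back to $\pi_1\times\pi_2$, I would associate to each irreducible subquotient $\sigma$ a canonical ``marker'' constituent $\tau_\sigma$ of $r_P(\sigma)$: for instance the unique maximal term in the Langlands/Zelevinsky ordering, or the constituent whose cuspidal support records the leftmost segments of $\sigma$. If $\sigma$ occurred with multiplicity $m$ in $\pi_1\times\pi_2$, then $\tau_\sigma$ would occur with multiplicity at least $m$ in $r_P(\pi_1\times\pi_2)$; multiplicity-freeness of the latter then forces $m=1$, provided the assignment $\sigma\mapsto\tau_\sigma$ is injective.

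The main obstacle I anticipate is precisely verifying this injectivity: one must show that the chosen ``marker'' distinguishes different isomorphism classes of irreducible subquotients. Equivalently, one must reconstruct the width-$\le 2$ multisegment of $\sigma$ from $\tau_\sigma$, using only the ladder structures of $\pi_1$ and $\pi_2$. This is a purely combinatorial problem once the representation-theoretic setup is in place, but its resolution is the technical heart of the argument and is likely where the hypothesis that \emph{both} factors are ladders (not merely that $\wid(\sigma)\le 2$) is used essentially.
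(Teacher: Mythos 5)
Your high-level strategy (control multiplicities via Jacquet modules, the geometric lemma, and a distinguished ``marker'' constituent) is indeed the spirit of the paper's argument, but the two steps on which your reduction rests are exactly the ones you leave open, and the first of them is false as stated. You ask for a single parabolic $P$ such that $\mathbf{r}_P(\pi_1\times\pi_2)$ is multiplicity-free in the Grothendieck group. This is not available in general: multiplicity-freeness of the Jacquet modules of each ladder factor does not survive the shuffles of the geometric lemma. Already for $\pi_1=\pi_2=L([0,1]_\rho)$ with $\rho$ a character of $G_1$, the constituent $L([0,1]_\rho)\otimes L([0,1]_\rho)$ occurs twice in $\mathbf{r}_{(2,2)}(\pi_1\times\pi_2)$ (once from each of the two ``all left / all right'' terms), and one checks that every proper standard parabolic produces a repeated constituent; similar coincidences occur whenever the two ladders share segments or cuspidal data. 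So the transfer step ``multiplicity-freeness of $\mathbf{r}_P$ forces $m=1$'' has no valid starting point, and the second step (injectivity of $\sigma\mapsto\tau_\sigma$), which you flag as the technical heart, is also not supplied. Note too that Theorem \ref{main-1} plays no role in the paper's proof of Theorem \ref{main-2} and does not obviously help here.

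The paper circumvents both difficulties by making the parabolic depend on $\sigma$: for $\sigma=L(\gotN)$ it takes $\alpha_\sigma$ grouping the segments of $\gotN$ by common left endpoint, and the canonical constituent $\sigma_\otimes$, which is a quotient of $\mathbf{r}_{\alpha_\sigma}(\sigma)$ by adjunction; this gives for free $m(\sigma,\pi_1\times\pi_2)\leq m(\sigma_\otimes,\mathbf{r}_{\alpha_\sigma}(\pi_1\times\pi_2))$, with no injectivity question, because one works with one $\sigma$ at a time and never needs the full Jacquet module to be multiplicity-free. The real content is then Proposition \ref{mult-one}: $m(\sigma_\otimes,\mathbf{r}_{\alpha_\sigma}(\pi_1\times\pi_2))\leq 1$, proved by induction on the number of segments of $\gotN$, peeling off the leftmost block. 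The base case (a pair of segments with the same left endpoint) is Lemma \ref{mul-lem}, which uses genericity and Rodier's theorem on uniqueness of Whittaker functionals, and the inductive step needs the Kret--Lapid description of Jacquet modules of ladder representations together with a combinatorial argument showing that only one term of the geometric lemma can contribute the leftmost block. Your proposal contains no substitute for the Whittaker-uniqueness input or for this uniqueness-of-contribution analysis, so as it stands it does not yield the theorem.
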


Both Theorems \ref{main-1} and \ref{main-2} provide ``upper bounds" on the set of irreducible subquotients of a parabolically induced representation. Let us finish by stating a conjecture of Erez Lapid, which claims that in a certain sense these bounds are tight for the case of a product of ladder representations. 
\begin{conjecture}\label{conje}
Let $a_1 < \ldots < a_{2k} < b_1 < \ldots < b_{2k}$ be given integers. Suppose that $\pi\in \irr_{[\rho]}$ is the ladder representation associated with the multiset $\{[a_{2i},b_{2i}]\}_{i=1}^k$, $\pi'\in \irr_{[\rho]}$ is the ladder representation associated with the multiset $\{[a_{2i-1},b_{2i-1}]\}_{i=1}^k$ and $\lambda$ is the standard representation associated with the union multiset $\{[a_j,b_j]\}_{j=1}^{2k}$. Then, in $\mathcal{R}$, we have

\[
[\pi\times\pi'] = \sum_{\begin{array}{cc}\scriptstyle \sigma\in \irr_{[\rho]} : \: \scriptstyle  \wid(\sigma)\leq 2  ,\;\\ \scriptstyle\sigma \mbox{ \footnotesize is a subquotient of } \lambda \end{array}} [\sigma]\;.
\]
\end{conjecture}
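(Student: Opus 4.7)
The plan is to use Theorems \ref{main-1} and \ref{main-2} to bound $[\pi\times\pi']$ from above, and then to establish the matching lower bound by induction on $k$, together with a combinatorial identification of the width-$\leq 2$ composition factors of $\lambda$.

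For the upper bound, write $\Delta_j = [a_j, b_j]$ and set $\lambda_{\mathrm{e}} = \zeta(\Delta_2)\times\cdots\times\zeta(\Delta_{2k})$, $\lambda_{\mathrm{o}} = \zeta(\Delta_1)\times\cdots\times\zeta(\Delta_{2k-1})$, so that $[\lambda_{\mathrm{e}}\times\lambda_{\mathrm{o}}]=[\lambda]$ in the commutative ring $\mathcal{R}$. Since $\pi$ is a composition factor of $\lambda_{\mathrm{e}}$ and $\pi'$ of $\lambda_{\mathrm{o}}$, the differences $[\lambda_{\mathrm{e}}]-[\pi]$ and $[\lambda_{\mathrm{o}}]-[\pi']$ are non-negative integer combinations of irreducibles, and since parabolic induction of irreducibles lies in the same non-negative cone, multiplying gives $[\pi\times\pi']\leq[\lambda]$. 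Thus every irreducible subquotient of $\pi\times\pi'$ is already a subquotient of $\lambda$. Theorem \ref{main-1}, applied with $\wid(\pi)=\wid(\pi')=1$, restricts these subquotients to width at most $2$, and Theorem \ref{main-2} makes them multiplicity free, yielding
\[
[\pi\times\pi'] \;=\; \sum_{\sigma\in S}[\sigma], \qquad S\subseteq\{\sigma\subset\lambda : \wid(\sigma)\leq 2\}.
\]

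The lower bound $S = \{\sigma\subset\lambda : \wid(\sigma)\leq 2\}$ is the heart of the conjecture. I would attack it by induction on $k$. The base case $k=1$ is Zelevinski's classical two-segment formula, yielding exactly two composition factors of $\lambda$, namely $L(\{\Delta_1,\Delta_2\})$ (a width-one ladder) and $L(\{\Delta_1\cup\Delta_2,\Delta_1\cap\Delta_2\})$ (width two), and both appear in $\pi\times\pi'=\zeta(\Delta_1)\times\zeta(\Delta_2)$. For the inductive step, I would use the Bernstein--Zelevinski geometric lemma to compute a Jacquet module of $\pi\times\pi'$ along a parabolic stripping off an extremal segment, recognise the outcome as a sum over products of ``smaller'' ladder representations to which the inductive hypothesis applies, and match it against a direct combinatorial enumeration of the width-$\leq 2$ subquotients of $\lambda$. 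These should be indexed by a distinguished family of pair-matchings between $\{a_i\}$ and $\{b_j\}$ whose crossing pattern is compatible with a partition into at most two ladders, a description made plausible by the staircase condition $a_1<\cdots<a_{2k}<b_1<\cdots<b_{2k}$.

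The principal obstacle will be this combinatorial characterization and its compatibility with the Jacquet module computation. Width is an extremal invariant defined through ladder coverings, and translating $\wid(L(\mathfrak{m}))\leq 2$ into an intrinsic, recursively stable condition on the multisegment $\mathfrak{m}$ seems to be the technical crux. An alternative route would be via the Aubert--Zelevinski involution, a ring automorphism known to preserve the ladder property: if it can also be shown to preserve the width filtration of Theorem \ref{main-1}---a plausible but nontrivial statement---one could pass to a dual setting in which the required subquotients might be realised more transparently as socles of explicit chains of submodules. In any case, the formulas of Tadi\'{c} \cite{tadic-speh} and Leclerc \cite{lecl} for restricted subclasses should provide both partial input and a sanity check for the induction.
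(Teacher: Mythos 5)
The statement you are addressing is Conjecture \ref{conje}, which the paper explicitly leaves open (it is attributed to Lapid and stated as a conjecture, not a theorem), so there is no proof in the paper to compare against; the question is whether your proposal actually closes it, and it does not. The first half of your argument is fine but is not new: the inequality $[\pi\times\pi']\leq[\lambda]$ in the positive cone, the width bound $\wid(\sigma)\leq 2$ from Theorem \ref{main-1}, and multiplicity-freeness from Theorem \ref{main-2} together give exactly the ``upper bound'' that the paper itself points out when it says its two theorems bound the set of subquotients and that the conjecture asserts these bounds are \emph{tight}. So everything up to $[\pi\times\pi']=\sum_{\sigma\in S}[\sigma]$ with $S$ contained in the width-$\leq 2$ subquotients of $\lambda$ is correct but is a restatement of known results, not progress on the conjecture.

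The genuine gap is the reverse inclusion, i.e.\ that \emph{every} $\sigma$ with $\wid(\sigma)\leq 2$ occurring in $\lambda$ actually occurs in $\pi\times\pi'$, and your proposal only names a strategy for it rather than carrying it out. Concretely: (i) you never produce the ``intrinsic, recursively stable'' combinatorial description of the width-$\leq 2$ subquotients of $\lambda$ that your induction needs --- determining which $L(\gotN)$ appear in a standard module is governed by Kazhdan--Lusztig data, and no argument is given that the width-$\leq 2$ constituents admit the clean matching description you posit; (ii) the inductive step via the geometric lemma is not performed, and even if the Jacquet module of $\pi\times\pi'$ were matched numerically with an inductive count, one still needs an argument (in the spirit of the paper's use of $\sigma_\otimes$ in Proposition \ref{mult-one}) to convert Jacquet-module occurrences back into occurrences of specific irreducibles $\sigma$ of $G_n$, since distinct $\sigma$ can share Jacquet components; (iii) the alternative route through the Zelevinski involution rests on the unproven assertion that the involution preserves the width filtration, which you yourself flag as ``plausible but nontrivial.'' You acknowledge these obstacles honestly, but that means the proposal is a research plan, not a proof; the heart of the conjecture remains open in your write-up exactly where it remains open in the paper.
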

In other words, there is an expected case in which all possible representations of width $2$ should occur in a single product. We expect this case to provide insight towards a formulation of a general rule for the irreducible factors of a product of ladder representations.


\subsection*{Acknowledgements}
I would like to thank Erez Lapid for introducing the subject to me and for many fruitful discussions.

\section{Notation and preliminaries}
\subsection{Generalities}
For a $p$-adic group $G$, let $\mathfrak{R}(G)$ be the category
of smooth complex representations of $G$ of finite length. Denote by $\irr(G)$ the set of equivalence classes
of irreducible objects in $\mathfrak{R}(G)$. Denote by $\mathcal{C}(G)\subset \irr(G)$ the subset of irreducible supercuspidal representations. Let $\mathcal{R}(G)$ be the Grothendieck group of $\mathfrak{R}(G)$. We write $\pi\mapsto [\pi]$ for the canonical map $\mathfrak{R}(G) \to \mathcal{R}(G)$.

Given $\pi\in \mathfrak{R}(G)$, we have $[\pi] = \sum_{\sigma\in \irr(G)} c_\sigma\cdot [\sigma]$. For every $\sigma\in \irr(G)$, let us denote the multiplicity $m(\sigma, \pi):= c_{\sigma}\geq0$.

Now let $F$ be a fixed $p$-adic field. We write $G_n = GL_n(F)$, for all $n\geq1$, and $G_0$ for the trivial group.

For a given $n$, let $\alpha = (n_1, \ldots, n_r)$ be a composition of $n$. We denote by $M_\alpha$ the subgroup of $G_n$ isomorphic to $G_{n_1} \times \cdots \times G_{n_r}$ consisting of matrices which are diagonal by blocks of size $n_1, \ldots, n_r$ and by $P_\alpha$ the subgroup of $G_n$ generated by $M_\alpha$ and the upper
unitriangular matrices. A standard parabolic subgroup of $G_n$ is a subgroup of the form $P_\alpha$ and its standard Levi factor is $M_\alpha$. We write $\mathbf{r}_\alpha: \mathfrak{R}(G_n)\to \mathfrak{R}(M_\alpha)$ and $\mathbf{i}_\alpha: \mathfrak{R}(M_\alpha)\to \mathfrak{R}(G_n)$ for the normalized Jacquet functor and the parabolic induction functor associated to $P_\alpha$.

Note that naturally $\mathcal{R}(M_\alpha)\cong \mathcal{R}(G_{n_1})\otimes \cdots\otimes \mathcal{R}(G_{n_r})$ and $\irr(M_\alpha)=\irr(G_{n_1})\times\cdots\times \irr(G_{n_r})$. 
\begin{definition}
We say that a representation $\sigma\in \irr(G_m)$ is a \textit{Jacquet module component} of $\pi\in \mathfrak{R}(G_n)$ if there is a Levi subgroup $M_\alpha< G_n$ and a representation $\tau=\tau_1\otimes\cdots\otimes \tau_r\in \irr(M_\alpha)$, such that $\sigma\cong \tau_i$ for some $i$ and $m(\tau,\mathbf{r}_\alpha(\pi))>0$.
\end{definition}

For $\pi_i\in \mathfrak{R}(G_{n_i})$, $i=1,\ldots,r$, we write
\[
\pi_1\times\cdots\times \pi_r = \mathbf{i}_{(n_1,\ldots,n_r)}(\pi_1\otimes\cdots\otimes \pi_r)\in \mathfrak{R}(G_{n_1+\ldots+n_r}).
\]
Let us write $\mathcal{R} = \oplus_{m \geq 0} \mathcal{R}(G_m)$. This product operation defines a commutative ring structure on the group $\mathcal{R}$, where the trivial one-dimensional representation of $G_0$ is treated as an identity element. 

We also write $\irr = \cup_{m\geq0} \irr(G_m)$ and $\mathcal{C} = \cup_{m\geq1} \mathcal{C}(G_m)$.

Given a set $X$, we write $\mathbb{N}(X)$ for the commutative semigroup of maps from $X$ to $\mathbb{N}= \mathbb{Z}_{\geq0}$ with finite support. For $A\in\mathbb{N}(X)$, we write
\[
\underline{A} = \{x\in X \,:\, A(x)>0\}\subset X
\]
for the support of $A$. 
Given a set $S\subset X$, we write $\mathbbm{1}_S\in \mathbb{N}(X)$ for the indicator function of $S$. This gives an embedding $X\to \mathbb{N}(X)$ by $x \mapsto \mathbbm{1}_x$. We will sometimes simply refer to $X$ as a subset of $\mathbb{N}(X)$ by implicitly using this embedding.

For $A\in \mathbb{N}(X)$, we write $\# A= \sum_{x\in X} A(x)$ for the size of $A$.

For $A,B\in \mathbb{N}(X)$ we say that $A\leq B$ if $B-A\in \mathbb{N}(X)$.

\subsection{Langlands classification}
Let us describe the Langlands classification of $\irr$ in terms convenient for our needs. 

For any $n$, let $\nu^s= |\det|^s_F,\;s\in \mathbb{C}$ denote the family of one-dimensional representations of $G_n$, where $|\cdot|_F$ is the absolute value of $F$. For $\pi\in \mathfrak{R}(G_n)$, we write $\pi\nu^s := \pi\otimes \nu^s\in \mathfrak{R}(G_n)$.

Given $\rho\in \mathcal{C}(G_n)$ and two integers $a\leq b$, we write $L([a,b]_\rho)\in \irr(G_{n(b-a+1)})$ for the unique irreducible quotient of $\rho\nu^{a}\times \rho\nu^{a+1}\times\cdots\times \rho\nu^b$. It will also be helpful to set $L([a,a-1]_\rho)$ as the trivial representation of $G_0$.

We also treat the \textit{segment} $\Delta= [a,b]_\rho$ as a formal object defined by the triple $([\rho],a,b)$. We denote by $\seg$ the collection of all segments that are defined by $\rho\in \mathcal{C}$ and integers $a-1\leq b$, up to the equivalence $[a,b]_\rho=[a',b']_{\rho'}$, when $\rho\nu^a\cong \rho'\nu^{a'}$ and $\rho\nu^b \cong \rho'\nu^{b'}$.

A segment $\Delta_1$ is said to precede a segment $\Delta_2$, if $\Delta_1 = [a_1,b_1]_{\rho} ,\;\Delta_2= [a_2,b_2]_{\rho}$ and $a_1\leq a_2-1\leq b_1<b_2$. We will write $\Delta_1 \prec \Delta_2$ in this case and say that the pair $\{\Delta_1,\Delta_2\}$ is linked.

We will write $[a_1,b_1]_{\rho}\subseteq[a_2,b_2]_{\rho}$ when $a_2\leq a_1$ and $b_1\leq b_2$.

The elements of $\mathbb{N}(\seg)$ are called \textit{multisegments}. Langlands classification gives a bijection 
\[
L: \mathbb{N}(\seg) \to \irr
\]
that extends the definition of $L$ for a single segment described above.

Given a non-zero multisegment $\gotM$, it is possible to write it as $\gotM = \Delta_1+\ldots+\Delta_k$, where $\Delta_i\in \seg$ for all $i$, and $\Delta_j \nprec \Delta_i$ for all $i<j$. We then define the \textit{co-standard module} associated with $\gotM$ to be the representation
\[
\tilde{\lambda}(\gotM) = L(\Delta_1)\times\cdots\times L(\Delta_k)\;.
\]
The isomorphism class of $\tilde{\lambda}(\gotM)$ does not depend on the enumeration of segments, as long as the condition above is satisfied. The representation $\tilde{\lambda}(\gotM)$ has a unique irreducible sub-representation, which is isomorphic to $L(\gotM)$. We refer to \cite{LM2} for a more thorough discussion of the classification with a similar terminology.

When none of the pairs of segments in $\gotM$ are linked the representation $L(\gotM)$ is called generic. In that case $\tilde{\lambda}(\gotM)\cong L(\gotM)$.

\begin{remark}\label{rmk}
We always have $m(L(\gotM_1+\gotM_2), L(\gotM_1)\times L(\gotM_2))=1$, for $\gotM_1,\gotM_2\in \mathbb{N}(\seg)$.
\end{remark}

\subsection{Supercuspidal lines}
For every $\pi\in \irr$ there exist $\rho_1,\ldots,\rho_r \in\mathcal{C}$ for which $\pi$ is a sub-representation of $\rho_1\times\cdots\times \rho_r$. The notion of supercuspidal support can then be defined as 
\[
\supp(\pi):= \rho_1+\ldots+\rho_r\in \mathbb{N}(\mathcal{C})\;.
\]
Note that supercuspidal supports can be easily read from the Langlands classification. Namely, for all $\gotM\in \mathbb{N}(\seg)$ and all $\rho\in \mathcal{C}$, $\supp(L(\gotM))(\rho) = 
\sum_{\Delta\in \seg\,:\, \rho\in \underline{\supp (\Delta)} }\gotM(\Delta)$, while for a single segment $\Delta=[a,b]_\rho$ we have $\supp(\Delta) = \rho\nu^a + \rho\nu^{a+1}+\cdots+\rho\nu^b$.

Given $\rho\in \mathcal{C}$, we call 
\[
\mathbb{Z}_{[\rho]}:=\{\rho\nu^a\,:\;a\in \mathbb{Z}\}\subset \mathcal{C}
\]
the \textit{line} of $\rho$. 

We write $\irr_{[\rho]}\subset \irr$ for the collection of irreducible representations whose supercuspidal support is supported on $\mathbb{Z}_{[\rho]}$. We also write $\seg_{[\rho]} = \{[a,b]_\rho\in \seg\,:\;a-1\leq b\}$ and $\mathcal{R}_{[\rho]}$ for the ring generated by $\irr_{[\rho]}$ in $\mathcal{R}$. It is then straightforward that the restriction of $L$ gives a bijection $\mathbb{N}(\seg_{[\rho]})\to \irr_{[\rho]}$.
 
If the lines of $\rho_1,\ldots , \rho_r\in\mathcal{C}$ are distinct and $\pi_i \in \irr_{\rho_i}$, then $\pi_1 \times \cdots \times \pi_r$ is irreducible. Thus, we can deal with questions of decomposition of induced representations in $\mathcal{R}$ by analyzing $\mathcal{R}_{[\rho]}$, for a single $\rho\in \mathcal{C}$.

\section{Width invariant}

Recall that $\pi\in \irr$ is called a \textit{ladder representation} if $\pi\in \irr_{[\rho]}$ for some $\rho\in \mathcal{C}$ and $\pi= L(\gotM)$, where $\gotM=[a_1,b_1]_\rho+\ldots+[a_k,b_k]_\rho\in \mathbb{N}(\seg_{[\rho]})$ is such that $a_1<\ldots<a_k$ and $b_1<\ldots<b_k$. In this case we will also call $\gotM$ a ladder multisegment.

Let us fix a supercuspidal representation $\rho\in\mathcal{C}$ for the rest of this note. 
We will naturally identify $\mathbb{Z}_{[\rho]}$ with $\mathbb{Z}$. For $\pi\in \irr_{[\rho]}$ we will then refer to $\supp(\pi)$ as an element of $\mathbb{N}(\mathbb{Z})$. A non-trivial $\Delta\in \seg_{[\rho]}$ can be uniquely written as $\Delta = [a,b]_\rho$. Therefore, we will simply write $\Delta=[a,b]$. We then write $b(\Delta)=a$ and $e(\Delta)=b$.

\begin{definition}
The \textit{width} $\wid(\gotM)$ of a non-trivial multisegment $\gotM\in \mathbb{N}(\seg_{[\rho]})$ is the minimal number $k$, for which it is possible to write $\gotM = \gotM_1 +\ldots + \gotM_k$ for some ladder multisegments $\gotM_1,\ldots,\gotM_k\in \mathbb{N}(\seg_{[\rho]}) $. 

We also write $\wid(\pi) = \wid(\gotM)$ for the width of the representation $\pi = L(\gotM)\in \irr_{[\rho]}$.

For any $\pi\in \mathfrak{R}(G_n)$ with $[\pi]\in \mathcal{R}_{[\rho]}$ the definition is extended by
\[
\wid(\pi) = \max\{ \wid(\sigma)\::\:\sigma\in\irr(G_n),\; m(\sigma,\pi)>0\}.
\]

\end{definition}

Note that ladder representations are precisely those irreducible representations with width $1$. Note too that $\wid(\gotM)$ is always bounded by the number $\# \gotM$ of segments in $\gotM$.

Now, let us consider the relation $\preceq'$ on $\seg_{[\rho]}$ that is defined by
\[
[a,b]\preceq' [c,d]\quad \Leftrightarrow \quad \mbox{either}\;\left\{\begin{array}{l} a< c\\ b<d\end{array} \right. \;\mbox{or}\; [a,b]=[c,d].
\]
This relation can be viewed as the transitive and reflexive closure of $\prec$. Note that if $\Delta_1,\Delta_2\in \seg_{[\rho]}$ are such that $\Delta_1\npreceq' \Delta_2$ and $\Delta_2\npreceq' \Delta_1$, then we must have either $\Delta_1\subseteq \Delta_2$ or $\Delta_2 \subseteq \Delta_1$.

\begin{lemma}\label{dilw}
For every $\gotM\in \mathbb{N}(\seg_{[\rho]})$,
\[
\wid(\gotM) = \max\{ k\;:\;  \mbox{there are non-trivial segments }\Delta_1\subseteq \ldots\subseteq \Delta_k\mbox{ s.t. }\Delta_1+\ldots+\Delta_k\leq\gotM \}.
\]
\end{lemma}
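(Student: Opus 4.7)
The statement is an instance of Mirsky's theorem (the dual of Dilworth), applied to an auxiliary poset on the copies of segments in $\gotM$. The key observation, already recorded just before the lemma, is that two segments $\Delta_1, \Delta_2 \in \seg_{[\rho]}$ are related by $\preceq'$ unless one is contained in the other. It follows that a multisegment is a ladder if and only if its segments are pairwise distinct and pairwise non-nested (where ``non-nested'' means that neither segment is a subset of the other, ruling out equal segments in particular).

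For the inequality $\wid(\gotM) \geq k$, where $k$ denotes the right-hand side of the lemma, the plan is to use pigeonhole: suppose $\gotM = \gotN_1 + \ldots + \gotN_w$ is a ladder decomposition with $w=\wid(\gotM)$, and fix a chain $\Delta_1 \subseteq \ldots \subseteq \Delta_k$ with $\Delta_1 + \ldots + \Delta_k \leq \gotM$. When these $k$ segments are distributed (with multiplicity) among the $w$ ladders, two of them must land in the same $\gotN_j$ whenever $w < k$. But the resulting pair would be nested, contradicting the defining strict monotonicity of both endpoints in a ladder. Hence $w \geq k$.

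For the reverse inequality I would mimic Mirsky's argument. Label the copies of each $\Delta \in \underline{\gotM}$ as $(\Delta, 1), \ldots, (\Delta, \gotM(\Delta))$, and define a partial order on the resulting set $P$ by $(\Delta, i) \leq (\Delta', j)$ iff either $\Delta \subsetneq \Delta'$, or $\Delta = \Delta'$ with $i \leq j$. A chain in $P$ corresponds exactly to a chain $\Delta_1 \subseteq \ldots \subseteq \Delta_r$ of segments in the sense of the right-hand side (its image has $\Delta_1 + \ldots + \Delta_r \leq \gotM$ since the copy indices attached to a fixed segment are all distinct), while an antichain in $P$ consists of distinct pairwise non-nested segments, which by the key observation above is precisely a ladder multisegment. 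Setting $h(p)$ to be the length of the longest chain in $P$ ending at $p$, the level sets $h^{-1}(1), \ldots, h^{-1}(k)$ are antichains, and hence induce a ladder decomposition $\gotM = \gotN_1 + \ldots + \gotN_k$, showing $\wid(\gotM) \leq k$. The only subtle point is setting up $P$ so that multiplicities are accounted for and the interplay between strict nesting and equality is correct; but this is straightforward bookkeeping once the dichotomy of $\preceq'$-comparability versus nesting is in hand, so no substantial obstacle arises.
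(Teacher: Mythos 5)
Your proof is correct, but it runs in the dual direction to the paper's. The paper puts the order $\preceq'$ on the segments of $\gotM$ counted with multiplicity, so that ladders are chains and nested families $\Delta_1\subseteq\ldots\subseteq\Delta_k$ are antichains, and then invokes Dilworth's theorem (minimum chain cover equals maximum antichain), citing \cite{dilw}. You instead order the indexed copies by containment, so that nested families become chains and ladders become antichains, and you apply Mirsky's theorem (minimum antichain cover equals longest chain), which you in effect reprove on the spot: the pigeonhole step is the easy inequality (a chain meets each antichain, i.e.\ each ladder, at most once), and the height-function level sets give the hard one. Both arguments hinge on the same dichotomy recorded before the lemma, namely that two segments are either strictly $\preceq'$-comparable or one contains the other, in order to identify ladder multisegments with the appropriate (anti)chains. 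What the paper's route buys is brevity, one sentence plus a reference; what yours buys is self-containedness (Mirsky's argument is elementary, whereas Dilworth's theorem is the deeper statement) and more careful multiplicity bookkeeping: your poset $P$ with copy indices and the tie-breaking rule makes explicit that two copies of an equal segment are comparable in the nesting order (so repetitions legitimately contribute to the right-hand side) yet can never lie in a common ladder, a point the paper's phrase ``counted with multiplicities'' leaves implicit, since distinct copies of the same segment are not strictly $\preceq'$-comparable and the relation $\preceq'$ on the multiset is, strictly speaking, only a preorder there.
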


\begin{proof}
Note that the collection of segments in $\gotM$, counted with multiplicities, together with the relation $\preceq'$ is a poset. A chain for this poset would give a ladder multisegment, while an antichain is a multisegment $\Delta_1+\ldots+\Delta_n\leq \gotM$ for which $\Delta_1\subseteq \ldots\subseteq \Delta_n$ holds. Thus, the statement follows from Dilworth's theorem \cite{dilw}.

\end{proof}

It follows from the Geometric Lemma of Bernstein-Zelevinski (see \cite[Section 1.2]{LM2}) that the collection of Jacquet module components of $\pi_1\times\cdots\times \pi_k\in \mathfrak{R}(G_n)$ is precisely the collection of representations $\tau\in \irr$ for which $m(\tau, \sigma_1\times\cdots\times \sigma_k)>0$ holds, for some choice of Jacquet module components $\sigma_i$ of $\pi_i$, for $i=1,\ldots,k$.

For $\sigma\in \irr_{[\rho]}$, let $b(\sigma)\in \mathbb{Z}_{[\rho]}\cong \mathbb{Z}$ be the minimal element in $\underline{\supp(\sigma)}$. We write $B(\sigma)=\supp(\sigma)(b(\sigma))$ for the multiplicity of $b(\sigma)$ in $\supp(\sigma)$.

For $\pi\in \mathfrak{R}(G_n)$  with $[\pi]\in \mathcal{R}_{[\rho]}$, we write
\[
j(\pi) = \max \;\{\; B(\pi')\;:\; \pi'\in \irr\mbox{ is a Jacquet module component of }\pi\}.
\]

\begin{lemma}\label{first-ineq}
For every $\pi\in\mathfrak{R}(G_n)$ with $[\pi]\in \mathcal{R}_{[\rho]}$, we have
\[
\wid(\pi) \leq j(\pi).
\]
\end{lemma}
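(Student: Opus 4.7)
The plan is to reduce immediately to the irreducible case and then expose the width through a carefully chosen Jacquet module component. Since $\wid(\pi)$ is defined as a maximum over irreducible subquotients $\sigma$ of $\pi$, and each such $\sigma$ is itself a Jacquet module component of $\pi$ (take the trivial parabolic $\alpha=(n)$ so that $\mathbf{r}_\alpha(\pi)=\pi$), any Jacquet component of $\sigma$ is also one of $\pi$ by exactness of the Jacquet functor. Hence $j(\pi)\geq j(\sigma)$, and the claim reduces to proving $\wid(\sigma)\leq j(\sigma)$ for every irreducible $\sigma\in\irr_{[\rho]}$.

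Write $\sigma=L(\gotM)$ and set $k:=\wid(\gotM)$. Lemma~\ref{dilw} yields a chain $\Delta_1\subseteq\cdots\subseteq\Delta_k$ with $\Delta_1+\cdots+\Delta_k\leq\gotM$; writing $\Delta_i=[a_i,b_i]$, one has $a_k\leq\cdots\leq a_1\leq b_1\leq\cdots\leq b_k$, so the integer $c:=a_1$ lies in the supercuspidal support of every $\Delta_i$ in the chain. I then introduce the multisegment $\gotM^{\geq c}\in\mathbb{N}(\seg_{[\rho]})$ obtained from $\gotM$ by replacing each $[a,b]$ with $b\geq c$ by $[\max(a,c),b]$ and discarding those with $b<c$. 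Each of the $k$ segments $[c,b_i]$ lies in $\gotM^{\geq c}$ and every segment of $\gotM^{\geq c}$ starts at position $\geq c$; hence $\tau:=L(\gotM^{\geq c})$ satisfies $b(\tau)=c$ and $B(\tau)\geq k$.

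The core of the argument is to exhibit $\tau$ as a Jacquet module component of $\sigma$. The approach is to apply the Geometric Lemma of Bernstein--Zelevinski to the embedding $L(\gotM)\hookrightarrow\tilde{\lambda}(\gotM)$ with respect to the parabolic $\alpha$ of shape $(|\gotM^{<c}|,|\gotM^{\geq c}|)$, where $\gotM^{<c}$ denotes the complementary left-cut multisegment. A direct verification using the negation of precedence shows that cutting every segment at $c$ preserves the co-standard condition, so the subquotient of $\mathbf{r}_\alpha(\tilde{\lambda}(\gotM))$ corresponding to the ``uniform cut at $c$'' has the form $\tilde{\lambda}(\gotM^{<c})\otimes\tilde{\lambda}(\gotM^{\geq c})$, whose socle equals $L(\gotM^{<c})\otimes\tau$.

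The principal obstacle is to transfer this socle factor from $\mathbf{r}_\alpha(\tilde{\lambda}(\gotM))$ down to the sub-representation $\mathbf{r}_\alpha(L(\gotM))$, ensuring it is not absorbed into the Jacquet module of the quotient $\tilde{\lambda}(\gotM)/L(\gotM)$. I plan to establish this via Frobenius reciprocity: by iteratively composing the single-segment surjections $L([a,c-1])\times L([c,b])\twoheadrightarrow L([a,b])$ for every segment of $\gotM$ crossing $c$, one should construct a non-zero morphism $L(\gotM^{<c})\times L(\gotM^{\geq c})\to L(\gotM)$, or dually place $L(\gotM)$ inside the socle of that product. Either direction of adjunction then realises $L(\gotM^{<c})\otimes\tau$ as a subquotient of an appropriate Jacquet module of $L(\gotM)$, so $\tau$ is a Jacquet component of $\sigma$, and $j(\sigma)\geq B(\tau)\geq k=\wid(\sigma)$, completing the proof.
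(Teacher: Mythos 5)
Your opening reduction and the use of Lemma \ref{dilw} to extract a nested chain $\Delta_1\subseteq\cdots\subseteq\Delta_k$ agree with the paper, and your target (a Jacquet component with $B=k$) is the right one; but the core transfer step is not just unproven, it is false as stated. It is not true in general that $\tau=L(\gotM^{\geq c})$, the cut of \emph{all} of $\gotM$ at $c$, is a Jacquet module component of $L(\gotM)$, nor that a nonzero morphism exists in either direction between $L(\gotM^{<c})\times L(\gotM^{\geq c})$ and $L(\gotM)$. Take $\gotM=[0,1]+[1,2]$ (a ladder, so $k=1$) with the chain $\Delta_1=[1,2]$, hence $c=1$, $\gotM^{\geq 1}=[1,1]+[1,2]$, $\gotM^{<1}=[0,0]$. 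The only compositions of $4$ with a part of size $3$ are $(3,1)$ and $(1,3)$, and the ladder Jacquet-module formula of \cite{LapidKret} gives $\mathbf{r}_{(3,1)}(L(\gotM))=L([0,1]+[2,2])\otimes L([1,1])$ and $\mathbf{r}_{(1,3)}(L(\gotM))=L([1,1])\otimes L([0,0]+[1,2])$; so $L([1,1]+[1,2])$ is not a Jacquet component of $L(\gotM)$, and comparing these modules (and their opposite-parabolic versions, via Frobenius reciprocity and second adjunction) with $L([0,0])\otimes L([1,1]+[1,2])$ shows $\Hom=0$ in all four configurations, i.e.\ neither the surjection nor the embedding you hope to iterate into existence can exist. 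The single-segment surjections $L([a,c-1])\times L([c,b])\twoheadrightarrow L([a,b])$ are real, but they cannot be composed across segments that are \emph{linked} to the chain rather than nested in it: in the example, the term $L([1,1]+[1,2])\otimes L([0,0])$ does occur in $\mathbf{r}_{(3,1)}(\tilde{\lambda}(\gotM))$, but it lies entirely in the Jacquet module of the other composition factor $L([0,2]+[1,1])$, which is exactly the absorption you need to rule out. Structurally, your $\tau$ has $B(\tau)=\supp(L(\gotM))(c)$, which can exceed $\wid(\gotM)$ (e.g.\ $\gotM=[0,1]+[1,2]+[1,2]$ has width $2$ but $\supp$-multiplicity $3$ at $c=1$), so your claim would even contradict the paper's subsequent corollary $\wid(\pi)=j(\pi)$. (A minor point: with the paper's conventions the upper cut sits in the \emph{left} tensor factor of the Jacquet module, not the right.)

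The paper avoids this by never cutting the whole multisegment at $c$: it first \emph{isolates} the chain. With $S$ the set of segments $\succeq'$ some $\Delta_i$, it splits $\gotM-(\Delta_1+\cdots+\Delta_k)$ into $\gotM_1$ (supported in $S$) and $\gotM_2$ (the rest), verifies the non-precedence conditions needed to rewrite $\tilde{\lambda}(\gotM)\cong\tilde{\lambda}(\gotM_2)\times L(\Delta_1)\times\cdots\times L(\Delta_k)\times\tilde{\lambda}(\gotM_1)$, and then uses the embedding $L(\gotM)\hookrightarrow\tilde{\lambda}(\gotM)$ together with Frobenius reciprocity: since the middle block $L(\Delta_1)\times\cdots\times L(\Delta_k)\cong L(\Delta_1+\cdots+\Delta_k)$ is irreducible (the $\Delta_i$ are nested), it is forced to appear as a tensor factor of an irreducible subquotient of the relevant Jacquet module of $L(\gotM)$. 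Only after the chain has been separated from the linked segments does the paper cut, applying the segment/ladder Jacquet formula to the chain alone at $a_k$ to get $L\bigl(\sum_i[a_k,b_i]\bigr)$ with $B=k$. To repair your argument you would need to replace your transfer step by such an isolation of the chain (or an equivalent device); the uniform cut of $\gotM$ at $c$ cannot be justified.
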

\begin{proof}
From exactness of the Jacquet functor, it suffices to prove the statement for $\pi\in\irr_{[\rho]}$.

Let $\pi\in \irr_{[\rho]}$ be given. We write $\pi = L(\gotM)$ and $k=\wid(\pi)$. By Lemma \ref{dilw}, there are non-trivial segments $\Delta_1+\ldots+ \Delta_{k}\leq \gotM$ for which $\Delta_1\subseteq \ldots\subseteq \Delta_{k}$.
We write
\[
S = \left\{\Delta\in\seg_{[\rho]}\;:\; \Delta_i\preceq' \Delta\;\,\mbox{for some }1\leq i\leq k \right\}.
\]
Let us define the following multisegments:
\[\gotM_1 = \mathbbm{1}_S\cdot\left(\gotM-(\Delta_1+\ldots + \Delta_{k})\right),\]
\[\gotM_2= (1-\mathbbm{1}_S)\cdot\left(\gotM-(\Delta_1+\ldots + \Delta_{k})\right)= \gotM - \gotM_1 - (\Delta_1+\ldots + \Delta_{k}).\]

We claim that 
\[\tilde{\lambda}(\gotM)\cong  \tilde{\lambda}(\gotM_2)\times L(\Delta_1)\times \cdots \times L(\Delta_{k})\times \tilde{\lambda}(\gotM_1).\]
It is enough to check that $\Delta \nprec \Delta'$, for all $\Delta\in \underline{\gotM-\gotM_2}$ and all $\Delta'\in \underline{\gotM-\gotM_1}$. 

Assume the contrary for certain $\Delta,\Delta'$. Then $\Delta_{i_0}\preceq' \Delta$ for a certain $i_0$ and from the transitivity of $\preceq'$ we get $\Delta_{i_0}\preceq' \Delta'$. Hence, $\Delta'\not\in \underline{\gotM_2}$ and we must have $\Delta' = \Delta_j$ for some $j$. But, from $\Delta_{i_0}\preceq' \Delta \prec \Delta'= \Delta_j$ we get $\Delta = \Delta' = \Delta_{i_0}$, which is a contradiction to the non-reflexivity of $\prec$.

Since $\pi$ is embedded in $\tilde{\lambda}(\gotM)$, from Frobenius reciprocity we see that the representation $\pi':=L(\Delta_1+\ldots +\Delta_{k})\cong L(\Delta_1)\times \cdots \times L(\Delta_{k})$ is a Jacquet module component of $\pi$.

Let us write $\Delta_i = [a_i,b_i]$ for all $1\leq i\leq k$. Then, $a_1\leq \ldots\leq a_{k}$ and $b_1\geq \ldots \geq b_{k}$. From the formula for Jacquet modules of segment representations and the Geometric Lemma, we know that
\[
\mathbf{r}_\alpha(\pi') = L\left(\sum_{i=1}^{k}[a_{k},b_i]\right) \otimes L\left( \sum_{i=1}^{k} [a_i,a_{k}-1]\right),
\]
where $P_\alpha$ is the appropriate parabolic subgroup.

Thus, $\pi'':=  L\left(\sum_{i=1}^{k}[a_{k},b_i]\right)$ is a Jacquet module component of $\pi'$, and therefore also of $\pi$. Thus, $\wid(\pi) =k= B(\pi'')\leq j(\pi)$. 

\end{proof}

\begin{proposition}\label{main-prop}
Let $\pi_1,\ldots, \pi_k\in \irr_{[\rho]}$ be ladder representations. 

Then $\wid(\pi_1\times\cdots\times \pi_k)\leq k$.
\end{proposition}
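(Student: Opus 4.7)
The plan is to combine Lemma \ref{first-ineq} with the explicit structure of Jacquet modules of ladder representations. By Lemma \ref{first-ineq}, it suffices to show $j(\pi_1\times\cdots\times\pi_k)\le k$. Applying the Geometric Lemma, as already used in the proof of Lemma \ref{first-ineq}, any Jacquet module component $\pi'$ of $\pi_1\times\cdots\times\pi_k$ is realized as an irreducible subquotient of some product $\sigma_1\times\cdots\times\sigma_k$, where each $\sigma_i$ is a Jacquet module component of $\pi_i$. Since supercuspidal support is additive under parabolic induction, one has $\supp(\pi')=\sum_{i=1}^k\supp(\sigma_i)$.

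The crucial input is that each Jacquet module component $\sigma_i$ of the ladder $\pi_i$ is again a ladder representation. This follows by iterating the explicit maximal-parabolic Jacquet module formula for ladders established in \cite{LM}, which writes $\mathbf{r}_\alpha(L(\gotM))$ as a multiplicity-free sum of tensor products $L(\gotN_1)\otimes L(\gotN_2)$ with both $\gotN_1$ and $\gotN_2$ ladders, obtained as compatible ``cuts'' of $\gotM$. Granting this, for any ladder $\sigma=L\bigl(\sum_{j=1}^{l}[a_j,b_j]\bigr)$ with $a_1<\cdots<a_l$, a direct inspection of the supercuspidal support yields $b(\sigma)=a_1$ and $B(\sigma)=1$, since the supercuspidal indexed by $a_1$ is covered only by the segment $[a_1,b_1]$ and not by any $[a_j,b_j]$ with $j\ge 2$.

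Combining these observations, set $m_0:=b(\pi')=\min_i b(\sigma_i)$. Then
\[
B(\pi')=\supp(\pi')(m_0)=\sum_{i=1}^k \supp(\sigma_i)(m_0),
\]
where each summand equals $B(\sigma_i)=1$ if $b(\sigma_i)=m_0$, and equals $0$ otherwise (since $b(\sigma_i)$ is by definition the minimum of $\supp(\sigma_i)$). Hence $B(\pi')\le \#\{i:b(\sigma_i)=m_0\}\le k$, giving $j(\pi_1\times\cdots\times\pi_k)\le k$, and then $\wid(\pi_1\times\cdots\times\pi_k)\le k$ via Lemma \ref{first-ineq}.

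The main obstacle I anticipate is ensuring that the ``Jacquet components are ladders'' statement holds for \emph{every} parabolic, not just for the maximal ones. This propagation should follow by an induction on the number of blocks of the Levi, using the observation that the maximal-parabolic formula of \cite{LM} produces ladders on both tensor sides, so that one can iterate the formula on each of them. A more self-contained alternative would be to bypass the full Jacquet module formula by producing by hand, from the ladder shape of $\pi_i$, only enough Jacquet components to witness the bound $\supp(\sigma_i)(m_0)\le 1$ used above.
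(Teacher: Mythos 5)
Your argument is correct and follows essentially the same route as the paper: reduce to bounding $j(\pi_1\times\cdots\times\pi_k)$ via Lemma \ref{first-ineq}, use the Geometric Lemma to realize each Jacquet module component as a subquotient of $\sigma_1\times\cdots\times\sigma_k$ with each $\sigma_i$ a Jacquet module component of the ladder $\pi_i$, invoke the fact that such components are again ladders (the paper cites \cite{LapidKret} for this, which is the cleaner reference than iterating the maximal-parabolic formula), and conclude $B(\sigma_i)=1$ so that $B$ of the product is at most $k$. The only difference is bookkeeping detail and the choice of citation for the ``Jacquet components of ladders are ladders'' input.
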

\begin{proof}
A Jacquet module component $\sigma$ of $\pi_1\times\cdots\times \pi_k$ is a subquotient of $\sigma_1\times\cdots\times \sigma_k$, where each $\sigma_i$ is a Jacquet module component of $\pi_i$. By the result of \cite{LapidKret}, every such $\sigma_i$ must be a ladder representation, which means that $B(\sigma_i)=1$. It easily follows that $B(\sigma) = B(\sigma_1\times\cdots\times \sigma_k)\leq k$. Hence, $j(\pi_1\times\cdots\times \pi_k)\leq k$ and the result follows from Lemma \ref{first-ineq}.

\end{proof}

\begin{corollary}
For every $\pi\in\mathfrak{R}(G_n)$ with $[\pi]\in \mathcal{R}_{[\rho]}$,
\[
\wid(\pi) = j(\pi).
\]
\end{corollary}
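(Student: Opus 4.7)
The inequality $\wid(\pi)\le j(\pi)$ is Lemma~\ref{first-ineq}, so the task is the reverse direction $j(\pi)\le\wid(\pi)$. My first move would be to reduce to the irreducible case: $\wid(\pi)$ is by definition a maximum over irreducible subquotients $\sigma$ of $\pi$, and by exactness of $\mathbf{r}_\alpha$ every Jacquet module component of $\pi$ is already a Jacquet module component of some irreducible subquotient of $\pi$, so $j(\pi)=\max_\sigma j(\sigma)$ as well. Hence it suffices to prove $j(\sigma)\le\wid(\sigma)$ for each $\sigma\in\irr_{[\rho]}$.

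For such $\sigma=L(\gotM)$ with $\wid(\sigma)=k$, I would select a minimal ladder decomposition $\gotM=\gotM_1+\cdots+\gotM_k$. Iterated application of Remark~\ref{rmk}, together with exactness of parabolic induction, shows that $\sigma$ appears as an irreducible subquotient of
\[
\Pi\;:=\;L(\gotM_1)\times\cdots\times L(\gotM_k),
\]
and exactness of $\mathbf{r}_\alpha$ then gives $j(\sigma)\le j(\Pi)$. The estimate $j(\Pi)\le k$ is precisely the content already extracted inside the proof of Proposition~\ref{main-prop}: a Jacquet module component of $\Pi$ is an irreducible subquotient of a product $\sigma_1\times\cdots\times\sigma_k$, where each $\sigma_i$ is a Jacquet module component of the ladder $L(\gotM_i)$; the Kret--Lapid theorem \cite{LapidKret} forces each $\sigma_i$ to be itself a ladder with $B(\sigma_i)=1$; and the bound $B(\sigma_1\times\cdots\times\sigma_k)\le \sum_i B(\sigma_i)=k$ follows because supercuspidal supports add under parabolic induction.

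Chaining the inequalities yields $j(\sigma)\le j(\Pi)\le k=\wid(\sigma)$, which together with Lemma~\ref{first-ineq} produces the claimed equality. I do not expect a real obstacle: the corollary is fundamentally a short repackaging of Lemma~\ref{first-ineq} and Proposition~\ref{main-prop}. The one step that is not purely mechanical is the subquotient realization of $\sigma$ inside a product of exactly $\wid(\sigma)$ ladders, which is immediate from Remark~\ref{rmk} applied inductively; the rest is exactness of Jacquet and parabolic induction.
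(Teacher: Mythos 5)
Your argument is correct and coincides with the paper's proof: reduce to $\pi$ irreducible, realize it (via Remark \ref{rmk}) as a subquotient of a product of $\wid(\pi)$ ladder representations, and then bound $j$ by exactness of the Jacquet functor together with the estimate already contained in the proof of Proposition \ref{main-prop}, finishing with Lemma \ref{first-ineq}. Your version merely spells out the reduction to the irreducible case and the inductive use of Remark \ref{rmk}, which the paper leaves implicit.
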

\begin{proof}
It is enough to consider $\pi\in \irr$. By definition we can write $\pi = L(\gotM_1+\ldots+\gotM_k)$, where $k=\wid(\pi)$ and $\pi_i = L(\gotM_i)$ are ladder representations, for all $1\leq i\leq k$. Thus, $m(\pi,\:\pi_1\times\cdots\times \pi_k)>0$.
From exactness of the Jacquet functor and the proof of Proposition \ref{main-prop}, we get $j(\pi)\leq j(\pi_1\times\cdots\times \pi_k)\leq k$. Combining with Lemma \ref{first-ineq}, the statement now follows.
 
\end{proof}

Theorem \ref{main-1} now follows from the next corollary.

\begin{corollary}
For all $\pi_1\in\mathfrak{R}(G_{n_1}),\;\pi_2\in \mathfrak{R}(G_{n_2})$ with $[\pi_1],[\pi_2]\in \mathcal{R}_{[\rho]}$,
\[
\wid(\pi_1\times\pi_2)\leq \wid(\pi_1)+\wid(\pi_2).
\]
\end{corollary}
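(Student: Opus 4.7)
The plan is to bootstrap directly off the preceding corollary, which identifies the combinatorial invariant $\wid$ with the Jacquet-theoretic invariant $j$. Since the preceding corollary gives $\wid(\pi) = j(\pi)$ for any $\pi\in\mathfrak{R}(G_n)$ with $[\pi]\in\mathcal{R}_{[\rho]}$, I would first restate the target inequality as
\[
j(\pi_1\times\pi_2)\leq j(\pi_1) + j(\pi_2).
\]
So the entire task reduces to controlling the quantity $B(\tau)$ for Jacquet module components $\tau$ of the product $\pi_1\times\pi_2$.

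Next, I would invoke the consequence of the Bernstein--Zelevinski Geometric Lemma recalled earlier in the section: every Jacquet module component $\tau$ of $\pi_1\times\pi_2$ satisfies $m(\tau, \sigma_1\times\sigma_2)>0$ for some Jacquet module components $\sigma_i$ of $\pi_i$ ($i=1,2$). In particular $B(\sigma_i)\leq j(\pi_i)$ by the definition of $j$. Since supercuspidal support is additive under parabolic induction, we have an equality $\supp(\tau) = \supp(\sigma_1) + \supp(\sigma_2)$ in $\mathbb{N}(\mathbb{Z}_{[\rho]})\cong \mathbb{N}(\mathbb{Z})$, which reduces the whole question to an elementary calculation on multisets of integers.

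To finish, I would compute: $b(\tau) = \min(b(\sigma_1),b(\sigma_2))$, and for each $i$ with $b(\sigma_i) > b(\tau)$ one has $\supp(\sigma_i)(b(\tau)) = 0$. Consequently,
\[
B(\tau) = \supp(\sigma_1)(b(\tau)) + \supp(\sigma_2)(b(\tau)) \leq B(\sigma_1) + B(\sigma_2) \leq j(\pi_1) + j(\pi_2).
\]
Taking the maximum over all Jacquet module components $\tau$ of $\pi_1\times\pi_2$ yields the desired bound on $j(\pi_1\times\pi_2)$, and hence, via the previous corollary, on $\wid(\pi_1\times\pi_2)$.

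There is no real obstacle here; the heavy lifting was done in the preceding corollary and in Proposition \ref{main-prop}. The only minor point worth verifying carefully is the slightly delicate case when $b(\sigma_1) = b(\sigma_2)$, where both components of the sum $\supp(\sigma_1)(b(\tau)) + \supp(\sigma_2)(b(\tau))$ are strictly positive; but even there the estimate by $B(\sigma_1) + B(\sigma_2)$ is exactly what is needed, so the sub-additivity of $j$ (and therefore of $\wid$) passes through cleanly.
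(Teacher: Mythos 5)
Your proof is correct, but it follows a genuinely different route from the paper's. The paper stays at the level of representations: it reduces to $\pi_1,\pi_2\in\irr_{[\rho]}$ by exactness, writes each $\pi_i$ via a minimal ladder decomposition of its multisegment, uses Remark \ref{rmk} to realize $\pi_i$ as a subquotient of a product of $\wid(\pi_i)$ ladder representations, and then applies Proposition \ref{main-prop} to the resulting product of $\wid(\pi_1)+\wid(\pi_2)$ ladders. You instead prove subadditivity of $j$ directly, using the Geometric Lemma statement recalled before Lemma \ref{first-ineq} together with additivity of supercuspidal support and the elementary inequality $\supp(\sigma_i)(b(\tau))\le B(\sigma_i)$, and then convert to $\wid$ via the preceding corollary $\wid=j$. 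In effect you generalize the counting argument inside the proof of Proposition \ref{main-prop} from $B(\sigma_i)=1$ (the ladder case, which needs \cite{LapidKret}) to $B(\sigma_i)\le j(\pi_i)$; the hard input ($j(\pi_i)\le\wid(\pi_i)$, which does rely on the ladder decomposition and \cite{LapidKret}) enters only through the corollary you cite, and since that corollary is established before this one, there is no circularity. Two minor points, both at the same level of precision as the paper's own usage: the Geometric Lemma statement tacitly allows one of the $\sigma_i$ to be the trivial representation of $G_0$ (when $\tau$ comes entirely from one factor), in which case its contribution to your displayed sum is zero and the estimate still holds; and your argument handles non-irreducible $\pi_i$ automatically, since both $j$ and $\wid$ are computed over irreducible constituents, whereas the paper first reduces to irreducibles by exactness. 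What the paper's route buys is independence from the $\wid=j$ corollary (Proposition \ref{main-prop} alone suffices); what yours buys is the cleaner intermediate statement that $j$ itself is subadditive on $\mathcal{R}_{[\rho]}$, with the translation to $\wid$ done once and for all.
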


\begin{proof}
By exactness of parabolic induction it is enough to assume that $\pi_1,\pi_2\in \irr_{[\rho]}$. Suppose that $\pi_1 = L(\gotM_1+\ldots + \gotM_s)$ and $\pi_2=L(\gotM_{s+1}+\ldots+\gotM_t)$ for ladder representations $\tau_i=L(\gotM_i)$, where $1\leq i\leq t$ and $s=\wid(\pi_1),\: t-s = \wid(\pi_2)$. Then from Remark \ref{rmk} it follows that $m(\pi_1,\, \tau_1\times \cdots\times \tau_s)>0$ and $m(\pi_2,\,\tau_{s+1}\times\cdots\times \tau_t)>0$. Thus, again by exactness of parabolic induction $\pi_1\times\pi_2$ appears as a subquotient of $(\tau_1\times\cdots\times \tau_s)\times (\tau_{s+1}\times \cdots\times \tau_t)$. The statement now follows from Proposition \ref{main-prop}.

\end{proof}

\section{Multiplicity One}

\begin{lemma}\label{mul-lem}
Let $a,b,c$ be integers with $a < b$ and $a-1\leq c \leq  b$. Fix the representation $\pi =L([a,b]+[a,c])\in \irr_{[\rho]}$.

Suppose that $\pi_1=L(\gotM_1), \pi_2=L(\gotM_2)$ are ladder representations. Then,
\[
m(\pi,\:\pi_1\times \pi_2)\leq 1\;.
\] 

In case $m(\pi,\:\pi_1\times \pi_2)= 1$ holds, both $\pi_1,\pi_2$ must be generic representations.
Furthermore, when $c=b$ we must have $\pi_1\cong\pi_2\cong L([a,b])$. Otherwise, when $c<b$, the pair $\{\gotM_1,\gotM_2\}$ must be of the form
\[
\left\{  \sum_{i=0}^t [a_{2i},a_{2i+1}-1] ,\;  [a,c]+\sum_{i=1}^{s} [a_{2i-1},a_{2i}-1]   \right\},
\]
for some $a=a_0,\; c+1<a_1<\ldots<a_l=b+1$, with either $l=2t+1=2s+1$ or $l=2s= 2t+2$.

\end{lemma}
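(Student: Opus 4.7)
Plan.

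Since $[a,b]$ and $[a,c]$ share the left endpoint $a$, they are never linked, so $\pi = L([a,b]+[a,c])$ is generic and factors as $\pi \cong L([a,c]) \times L([a,b])$ (using the convention that $L([a,a-1])$ is the trivial representation when $c = a-1$). This factorization is the starting point. Combined with the identity $\supp(\pi_1) + \supp(\pi_2) = \supp(\pi)$ and the fact that any ladder $\pi_i$ satisfies $B(\pi_i) = 1$, it forces each $\pi_i$ to contain exactly one segment starting at $a$ (when $c \geq a$). In the sub-case $c = b$, an analogous argument applied to the maximal support point $b$, combined with the genericity constraint on $\pi_i$ established below, pins each $\pi_i$ down to a single segment $[a,b]$, giving $\pi_1 \cong \pi_2 \cong L([a,b])$.

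For the main case $c < b$, the support profile is $2\cdot \mathbbm{1}_{[a,c]} + \mathbbm{1}_{(c,b]}$. Since each point of $(c,b]$ appears in exactly one of the supports $\supp(\pi_1), \supp(\pi_2)$, the segments of each $\gotM_i$ restricted to $(c,b]$ are pairwise disjoint from each other and from those of $\gotM_j$, and together partition $(c,b]$ into consecutive intervals $[a_1, a_2-1], \ldots, [a_{l-1}, a_l-1]$ (with $a_0 = a$ and $a_l = b+1$). The strict ordering of endpoints within each ladder then forces alternation: one of $\gotM_1, \gotM_2$ picks up the even-indexed intervals $[a_0, a_1-1], [a_2, a_3-1], \ldots$ (with the first interval $[a, a_1-1]$ being the rightward extension of its segment beginning at $a$, thereby absorbing $[a,c]$), and the other picks up the explicit segment $[a,c]$ together with the odd-indexed intervals $[a_1, a_2-1], [a_3, a_4-1], \ldots$. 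The parity dichotomy $l = 2t+1$ vs.\ $l = 2s = 2t+2$ records which of $\gotM_1, \gotM_2$ contains the final interval $[a_{l-1}, b]$.

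The multiplicity bound $m(\pi, \pi_1 \times \pi_2) \leq 1$, together with the genericity constraint on $\pi_i$, is the most delicate part of the argument. My plan is to combine Tadic's $M^*$ ring homomorphism with the Kret-Lapid explicit formula for $M^*$ on ladders from \cite{LapidKret}: the identity $M^*(\pi_1 \times \pi_2) = M^*(\pi_1) \cdot M^*(\pi_2)$ allows the left-hand side to be computed explicitly, and matching against the expansion $\sum_\sigma m(\sigma, \pi_1 \times \pi_2) M^*(\sigma)$ extracts $m(\pi, \pi_1 \times \pi_2)$ as a specific coefficient, with Theorem~\ref{main-1} restricting attention to width-$\leq 2$ subquotients $\sigma$ of $\pi_1 \times \pi_2$. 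The main technical obstacle is the rigidity of this coefficient extraction, especially the exclusion of non-generic $\pi_i$ (whose segments would overlap in $[a,c]$) and of pairs outside the alternating pattern; the simple shape of $\pi$ and the explicit Kret-Lapid formula together should make this bookkeeping tractable.
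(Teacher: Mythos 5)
The heart of this lemma is the pair of claims that $m(\pi,\pi_1\times\pi_2)\leq 1$ and that a nonzero multiplicity forces $\pi_1,\pi_2$ to be generic, and your proposal does not actually prove either: it only outlines a plan to extract the relevant coefficient from $M^*(\pi_1)\cdot M^*(\pi_2)$ via Tadic's comultiplication and the Kret--Lapid formula, and you yourself flag ``the rigidity of this coefficient extraction'' and ``the exclusion of non-generic $\pi_i$'' as unresolved obstacles. That is a genuine gap, not bookkeeping: your parenthetical reason for excluding non-generic ladders (that their segments ``would overlap in $[a,c]$'') is false, since a non-generic ladder may consist of linked but non-overlapping segments (e.g.\ juxtaposed segments such as $[0,0]+[1,1]$), so no support computation alone rules them out. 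The paper settles both points at once by a short representation-theoretic argument you are missing: $\pi$ is generic because $[a,b]$ and $[a,c]$ are unlinked; by exactness of the Whittaker (twisted Jacquet) functor a generic subquotient yields a nonzero Whittaker functional on $\pi_1\times\pi_2$; Rodier's theorem \cite{rodier} then gives both that $\pi_1\otimes\pi_2$ (hence each $\pi_i$) is generic and that the space of Whittaker functionals on the induced representation is exactly one-dimensional, so the total multiplicity of generic constituents, in particular of $\pi$, is at most one. Nothing like the $M^*$-coefficient analysis is needed, and without carrying it out your plan does not constitute a proof.

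A second, related flaw is in the combinatorial part you do carry out. Your alternation step is justified by ``the strict ordering of endpoints within each ladder,'' but the ladder condition alone does not prevent a single $\gotM_i$ from containing two consecutive (hence linked) intervals covering adjacent pieces of $(c,b]$ --- ladders typically contain linked segments. What forces the gaps, and hence the alternating pattern in the statement, is that each $\gotM_i$ has \emph{pairwise unlinked} segments, i.e.\ precisely the genericity of $\pi_1,\pi_2$ established in the first part; the paper makes this dependence explicit before writing $\gotM_i$ as a sum of mutually unlinked intervals. So the structural description in your second paragraph silently presupposes the very genericity statement your third paragraph leaves unproven. The support inequalities you use (each point of $(c,b]$ has multiplicity one, the point $a$ has multiplicity two when $c\geq a$, and $[a,c]$ must sit inside one segment of each $\gotM_i$) do match the paper's argument and are fine; it is the two items above that need to be repaired, most economically by the Whittaker-model argument.
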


\begin{proof}
Since the pair of segments $\{[a,b], [a,c]\}$ is not linked, $\pi$ is a generic representation. Recall that means that the space of $\pi$ carries a non-zero Whittaker functional. From exactness of the Whittaker functor, we deduce that $\pi_1\times \pi_2$ carries a non-zero Whittaker functional as well. Now, by Rodier's theorem \cite{rodier} $\pi_1\otimes \pi_2$ is generic, hence, so are $\pi_1,\pi_2$. Moreover, since $\pi_1,\pi_2$ are irreducible it follows from the same theorem that the space of Whittaker functionals on $\pi_1\times \pi_2$ is one-dimensional. Again, by exactness this means $\pi$ cannot appear with multiplicity $>1$ in the product.

Note, that since $\gotM_i,\,i=1,2$ are multisegments for which $L(\gotM_i)$ are generic ladder representations, their segments must be pairwise unlinked. Thus, we can write $\gotM_i = [t^i_0,t^i_1-1] + [t^i_2,t^i_3-1] + \ldots + [t^i_{2k_i},t^i_{2k_i+1}-1]$, for $t^i_0<t^i_1<\ldots <t^i_{2k_i+1},\;i=1,2$. 

Note that,
\[
2\supp(L([a,c])) \leq \supp(L([a,c])+\supp(L[a,b]) = \supp(\pi) =
\]
\[= \supp(\pi_1\times \pi_2) = \supp(\pi_1)+\supp(\pi_2)\;.
\] 
Clearly it follows that $[a,c]\subseteq [t^1_{2j_1}, t^1_{2j_1+1}-1] $ and $[a,c]\subseteq [t^2_{2j_2}, t^2_{2j_2+1}-1]$ for some $j_1,j_2$.

The rest of the statement easily follows after noting that each of the supercuspidals appearing in $\underline{\supp(L([c+1,b]))}$ must appear only once in $\supp(\pi_1)+\supp(\pi_2)$.

\end{proof}

Given $\gotM\in \mathbb{N}(\seg_{[\rho]})$ with $\pi=L(\gotM)\in \irr(G_n)$, let us denote the collection of integers
\[
B_\gotM = \{b(\Delta)\::\:\Delta\in \underline{\gotM}\}\subset \mathbb{Z}_{[\rho]}\cong \mathbb{Z}\;.
\] 
We can write $B_\gotM = \{b_1,\ldots, b_k\}$ with $b_1<\ldots< b_k$. For each $1\leq i\leq k$, write $S_j = \{\Delta\in \seg_{[\rho]}\::\: b(\Delta)=b_j\}$. With these notations we can write 

\[
\pi_\otimes = L\left(\gotM\cdot\mathbbm{1}_{S_1}\right)\otimes \cdots\otimes L\left(\gotM\cdot\mathbbm{1}_{S_k}\right)
\]
for the representation of the corresponding Levi subgroup $M_{\alpha_\pi}$ of $G_n$. 

Since $L(\gotM\cdot\mathbbm{1}_{S_j})$ are generic representations, it is clear that $\tilde{\lambda}(\gotM)\cong \mathbf{i}_{\alpha_\pi}(\pi_\otimes)$. Since $\pi$ is embedded in $\tilde{\lambda}(\gotM)$, by adjunction we always find $\pi_\otimes$ as a quotient of $\mathbf{r}_{\alpha_\pi}(\pi)$.

\begin{proposition}\label{mult-one}
Let $\pi_1=L(\gotM_1),\pi_2=L(\gotM_2)$ be ladder representations in $\irr_{[\rho]}$. Suppose that $\pi_1\times\pi_2\in \mathfrak{R}(G_n)$. 

For any $\sigma\in \irr(G_n)$, we have $m(\sigma_\otimes,\,\mathbf{r}_{\alpha_\sigma} (\pi_1\times\pi_2))\leq1$.
\end{proposition}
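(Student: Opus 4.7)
The plan is to couple a width bound on $\sigma$ with the Geometric Lemma of Bernstein--Zelevinski and then apply Lemma \ref{mul-lem} block-by-block over the starting-point decomposition of $\sigma_\otimes$.

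Write $\sigma = L(\gotM_\sigma)$ and $B_{\gotM_\sigma} = \{b_1 < \cdots < b_k\}$. If $\sigma_\otimes$ does not occur in $\mathbf{r}_{\alpha_\sigma}(\pi_1 \times \pi_2)$ there is nothing to prove, so I may assume $\sigma$ is a subquotient of $\pi_1 \times \pi_2$. Proposition \ref{main-prop} combined with Lemma \ref{dilw} then gives $\wid(\sigma) \leq 2$, which forces every chain of segments in $\gotM_\sigma$ under inclusion $\subseteq$ to have length at most two. Since all segments of $\gotM_\sigma \cdot \mathbbm{1}_{S_j}$ share the left-endpoint $b_j$, they are pairwise nested, and consequently $\#(\gotM_\sigma \cdot \mathbbm{1}_{S_j}) \leq 2$. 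Hence each factor $L(\gotM_\sigma \cdot \mathbbm{1}_{S_j})$ of $\sigma_\otimes$ is precisely a generic two-segment representation of the form $L([b_j,b] + [b_j,c])$ addressed by Lemma \ref{mul-lem}.

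Next, I would expand $\mathbf{r}_{\alpha_\sigma}(\pi_1 \times \pi_2)$ using the Geometric Lemma, obtaining in the Grothendieck group
\[
\left[\mathbf{r}_{\alpha_\sigma}(\pi_1 \times \pi_2)\right] = \sum_{T} \bigotimes_{j=1}^{k} \left[\tau^T_{1,j} \times \tau^T_{2,j}\right],
\]
where $T$ ranges over bookkeeping data describing how the ranks of $\pi_1, \pi_2$ are split and interleaved into the blocks of $\alpha_\sigma$, and each $\tau^T_{i,j}$ is an irreducible constituent of a suitable Jacquet module of $\pi_i$. By the Kret--Lapid theorem \cite{LapidKret}, every such $\tau^T_{i,j}$ is again a ladder representation, and each Jacquet module of $\pi_1, \pi_2$ is multiplicity-free. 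Taking multiplicities summand-by-summand yields
\[
m(\sigma_\otimes, \mathbf{r}_{\alpha_\sigma}(\pi_1 \times \pi_2)) = \sum_{T} \prod_{j=1}^{k} m\!\left(L(\gotM_\sigma \cdot \mathbbm{1}_{S_j}),\, \tau^T_{1,j} \times \tau^T_{2,j}\right).
\]

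By Lemma \ref{mul-lem}, each inner multiplicity is bounded by $1$, so every $T$-summand is at most $1$. The main obstacle is to show that at most one $T$ contributes a nonzero term. Here I would invoke the rigidity clauses of Lemma \ref{mul-lem}: when a $j$-factor equals $1$, the pair $\{\tau^T_{1,j}, \tau^T_{2,j}\}$ is pinned down (up to swapping) by $\gotM_\sigma \cdot \mathbbm{1}_{S_j}$ in the explicit ``interlaced-endpoints'' form stated there, together with the requirement that both $\tau^T_{i,j}$ be generic ladder representations. Threading these local constraints across $j = 1, \ldots, k$, and using that the segments of $\gotM_1, \gotM_2$ are linearly ordered by their strictly increasing endpoints, should reconstruct a unique compatible cut scheme $T$. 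This local-to-global gluing is the combinatorial core of the argument; the ladder condition on $\pi_1, \pi_2$ is exactly what makes the reconstruction unambiguous, since any ambiguity in how to assign a segment of $\gotM_i$ to a particular block of $\alpha_\sigma$ would violate the strict monotonicity of left-endpoints.
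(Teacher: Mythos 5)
Your setup is sound and close in spirit to the paper's: the width bound $\wid(\sigma)\leq 2$ (via Proposition \ref{main-prop} and Lemma \ref{dilw}) correctly forces each block $\gotM_\sigma\cdot\mathbbm{1}_{S_j}$ to consist of at most two segments with common left endpoint, the iterated Geometric Lemma expansion is legitimate, the Jacquet modules of ladders are multiplicity-free by \cite{LapidKret}, and Lemma \ref{mul-lem} bounds each block multiplicity by $1$. But the crucial step --- that at most one datum $T$ contributes a nonzero product --- is asserted, not proved, and the assertion you lean on is not what Lemma \ref{mul-lem} says. The lemma does \emph{not} pin down the pair $\{\tau^T_{1,j},\tau^T_{2,j}\}$ from $\gotM_\sigma\cdot\mathbbm{1}_{S_j}$: it only says the pair must have the interlaced form $\bigl\{\sum_i[a_{2i},a_{2i+1}-1],\,[a,c]+\sum_i[a_{2i-1},a_{2i}-1]\bigr\}$ for \emph{some} choice of cut points $a_1<\ldots<a_l$, and many such choices can a priori produce the same two-segment representation $L([a,b]+[a,c])$. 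So the ``rigidity'' you invoke does not exist at the level of a single block, and the local-to-global gluing you describe as the ``combinatorial core'' is exactly the part that still has to be done.

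The uniqueness in fact comes from the interaction between the allowed shapes in Lemma \ref{mul-lem} and the constraints on which $\tau$'s can occur in a Jacquet module of the specific ladders $\pi_1,\pi_2$: by \cite{LapidKret}, a leftmost Jacquet factor of $\pi_i=L(\Delta^i_1+\ldots+\Delta^i_{k_i})$ is $L\bigl(\sum_j [c^i_j,e(\Delta^i_j)]\bigr)$ with $b(\Delta^i_j)\leq c^i_j\leq e(\Delta^i_j)+1$, so its right endpoints are frozen to be those of $\pi_i$ and only the cut points $c^i_j$ vary. The paper proves uniqueness by assuming two admissible cut-point sequences $(a_i)$, $(a'_i)$ for the first block, taking the minimal index where they differ, and deriving a contradiction with the monotonicity of the $e(\Delta^1_j)$ and $c^1_j$; it also has to resolve the assignment ambiguity (which representation of the pair is a factor of which $\pi_i$), treating separately the cases $\Delta=\hat{\Delta}$, $\hat{\Delta}$ nontrivial, and $\hat{\Delta}$ trivial. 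Moreover, the paper avoids your global gluing over all $k$ blocks altogether by inducting on $\#\gotN$: it strips off only the block of minimal left endpoint, proves uniqueness of the pair there, and applies the induction hypothesis to $\delta_1\times\delta_2$, which is again a product of ladders. To complete your argument you would either need to reproduce this endpoint-comparison argument block by block and additionally show the choices at different blocks determine each other, or simply adopt the inductive reduction; as written, the proof has a genuine gap at its central step.
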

\begin{proof}
Let $\sigma = L(\gotN)\in \irr(G_n)$. We will prove the statement by induction on the number $\# \gotN$ of segments in $\gotN$.

Let $\Delta\in\underline{\gotN}$ be a segment with minimal $b(\Delta)$. Note that all irreducible subquotients of $\pi_1\times\pi_2$ must have the same supercuspidal support, i.e.~$\supp(L(\gotM_1+\gotM_2))$. We may assume that $\supp(\sigma)=\supp(L(\gotM_1+\gotM_2))$, for otherwise the statement is trivially true. Thus, $b(\Delta)$ is also the minimal point in $\underline{\supp(L(\gotM_1+\gotM_2))}$.

Since $\gotM_1,\gotM_2$ are ladders, $\supp(\sigma)(b(\Delta))\leq2$. Hence, if $m(\sigma_\otimes,\,\mathbf{r}_{\alpha_\sigma} (\pi_1\times\pi_2))>0$, then there can be at most one segment $\hat{\Delta}\in\underline{\gotN-\Delta}$ with $b(\hat{\Delta})=b(\Delta)$. In case there is no such segment, let us still write $\hat{\Delta} = [b(\Delta), b(\Delta)-1]$. Moreover, let us always assume that $e(\hat{\Delta})\leq e(\Delta)$.

 We write $\sigma'= L(\gotN -\Delta-\hat{\Delta})$. Then, $\sigma_\otimes = L(\Delta+\hat{\Delta})\otimes \sigma'_\otimes\in \irr(M_{\alpha_\sigma})$. Let us write $M_\beta\cong G_{m_1}\times G_{m_2}$ for the standard Levi subgroup of $G_n$ corresponding to $L(\Delta+\hat{\Delta})\otimes \sigma'$. Then $M_{\alpha_\sigma}$ can be written as $G_{m_1}\times M_{\alpha_{\sigma'}}$.

By the Geometric Lemma (see again \cite[Section 1.2]{LM2}), we know that
\[
[\mathbf{r}_\beta(\pi_1\times \pi_2)] = \sum_i [\tau_1^i\times \tau_2^i]\otimes [\delta_1^i\times \delta_2^i],
\]
where $i$ goes over all possible irreducible subquotients $\tau_1^i\otimes \delta_1^i$ and  $\tau_2^i\otimes \delta_2^i$ of the appropriate Jacquet modules of $\pi_1$, $\pi_2$, respectively. Since $\mathbf{r}_{\alpha_\sigma} = (1\otimes \mathbf{r}_{\alpha_{\sigma'}})\mathbf{r}_\beta$, we have
\[
m(\sigma_\otimes,\,\mathbf{r}_{\alpha_\sigma} (\pi_1\times\pi_2)) = \sum_i m(L(\Delta+\hat{\Delta}),\,\tau_1^i\times \tau_2^i)\cdot m(\sigma'_\otimes,\,\mathbf{r}_{\alpha_{\sigma'}} (\delta_1^i\times \delta_2^i)).
\]

Recall that from the result of \cite{LapidKret} it follows that $\tau_j^i, \delta_j^i$ are all ladder representations. By the induction hypothesis it is enough to show that $m(L(\Delta+\hat{\Delta}),\,\tau_1^i\times \tau_2^i)=0$ for all $i$, except for possibly one index $i_0$, and that $m(L(\Delta+\hat{\Delta}),\,\tau_1^{i_0}\times \tau_2^{i_0})\leq1$. The latter statement indeed follows from Lemma \ref{mul-lem}.

Suppose that the Jacquet modules of $\pi_1,\pi_2$ have respective irreducible subquotients $\tau_1\otimes \delta_1$,  $\tau_2\otimes \delta_2$, for which $\tau_1\times\tau_2$ contains $L(\Delta+\hat{\Delta})$ as a subquotient. It remains to show the uniqueness of such subquotients.

Note first that if $\Delta=\hat{\Delta}$, then Lemma \ref{mul-lem} implies that $\tau_1\cong\tau_2\cong L(\Delta)$, which gives the wanted conclusion. Thus, we can assume $e(\hat{\Delta})< e(\Delta)$ henceforth.

Lemma \ref{mul-lem} now states that such pair $\{\tau_1,\tau_2\}$ must be of the form $\mathcal{P}=\{\rho_1,\rho_2\}$, where

\[
\rho_1= L\left( \sum_{i=0}^t [a_{2i},a_{2i+1}-1] \right),\quad \rho_2 = L\left( \hat{\Delta}+\sum_{i=1}^s  [a_{2i-1},a_{2i}-1] \right),  
\]
for some $a=a_0,\; e(\hat{\Delta})+1 <a_1<\ldots<a_l=b+1$, with either $l=2t+1=2s+1$ or $l=2s= 2t+2$. Here we write $\Delta= [a,b]$. 

Let us write $\pi_i = L(\Delta^i_1+\ldots+\Delta^i_k)$ with $e(\Delta^i_{k_i})<\ldots<e(\Delta^i_1)$, for $i=1,2$. Recall from \cite{LapidKret} that $\tau_i$, being a leftmost Jacquet module component, must be expressible in the form $L\left( \sum_{j=1}^{k_i} [c^i_j,e(\Delta^i_j)] \right)$, where $c^i_{k_i}<\ldots<c^i_1$ and $b(\Delta^i_j)\leq c^i_j \leq e(\Delta^i_j)+1$ for all $1\leq j\leq k_i$ and $i=1,2$.

Let us first consider the case that $\hat{\Delta}$ is a non-trivial segment. Then, $\supp(L(\gotM_1+\gotM_2))(b(\Delta))=2$ and $c^1_{k_1} = c^2_{k_2}=b(\Delta)$. Comparing the descriptions of $\mathcal{P}$ with that of $\{\tau_1,\tau_2\}$, we see that $\{e(\Delta^1_{k_1}),e(\Delta^2_{k_2})\} = \{e(\hat{\Delta}), a_1-1\}$. Since $e(\hat{\Delta})<a_1-1$, there is a unique identification between $\mathcal{P}$ and $\{\tau_1,\tau_2\}$. Hence, we can assume without loss of generality that $\tau_1=\rho_1$ and $\tau_2=\rho_2$.

Now, in the other case, that is when $\hat{\Delta}$ is the trivial segment, $\supp(L(\gotM_1+\gotM_2))(b(\Delta))=1$. Without loss of generality we can assume that $m(b(\Delta),\,\supp(\pi_1))=1$. Clearly, we again have $\tau_1=\rho_1$ and $\tau_2=\rho_2$.

Assume now there is a different pair $\tau'_1\otimes \delta'_1$,  $\tau'_2\otimes \delta'_2$ of irreducible subquotients of the respective Jacquet modules of $\pi_1,\pi_2$, such that
\[
\tau'_1= L\left( \sum_{i=0}^{t'} [a'_{2i},a'_{2i+1}-1] \right),\quad \tau'_2 = L\left(\hat{\Delta}+ \sum_{i=1}^{s'} [a'_{2i-1},a'_{2i}-1] \right),
\]
with similar assumptions on the indices.

Let $1\leq i_0$ be the minimal index for which $a_{i_0}\neq a'_{i_0}$. We can assume that $a'_{i_0}< a_{i_0}$. Also, without loss of generality we can assume that $i_0$ is odd. Otherwise, $\pi_2$ should replace $\pi_1$ for the rest of the argument. 

Note that $a_{i_0}-1 = e(\Delta^1_{j_0})$ and $a_{i_0-1} = c^1_{j_0}$ for some $j_0$. Using a similar reasoning on $\tau'_1$, we see there is an index $j_1$ for which $a'_{i_0}-1 = e(\Delta^1_{j_1})$. Hence, $e(\Delta^1_{j_1})<e(\Delta^1_{j_0})$ and $j_0<j_1$. Therefore,
\[
c^1_{j_1} < c^1_{j_0} = a_{i_0-1} = a'_{i_0-1} < a'_{i_0} = e(\Delta^1_{j_1})+1.
\]
In particular, $[c^1_{j_1},e(\Delta^1_{j_1})]$ is a non-trivial segment in the multisegment defining $\tau_1$, which means $a'_{i_0} = a_{i_1}$ for some odd $i_1<i_0$. But, since $i_1< i_0-1$ we get a contradiction from $a_{i_1}<a_{i_0-1}<a'_{i_0}$.

\end{proof}

\begin{corollary}[Theorem \ref{main-2}]
Given ladder representations $\pi_1,\pi_2\in \irr$, $m(\sigma,\,\pi_1\times \pi_2)\leq1$ for all $\sigma\in \irr$.
\end{corollary}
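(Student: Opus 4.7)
The plan is to derive this corollary directly from Proposition \ref{mult-one} via two reductions and an appeal to the generic-decomposition observation recorded just before that proposition.

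First I would reduce to the case where $\pi_1$ and $\pi_2$ lie on a common supercuspidal line. Any ladder multisegment is by definition supported on a single line, so $\pi_i \in \irr_{[\rho_i]}$ for some $\rho_i \in \mathcal{C}$. If $\rho_1$ and $\rho_2$ lie on distinct lines, the preliminaries already state that $\pi_1 \times \pi_2$ is irreducible, so every multiplicity is automatically at most one. Otherwise both representations belong to $\mathcal{R}_{[\rho]}$ for a common $\rho$, and so does every irreducible subquotient of their product.

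Next, fix an irreducible subquotient $\sigma \in \irr_{[\rho]}$ of $\pi_1 \times \pi_2$. The key observation, already recorded in the paragraph preceding Proposition \ref{mult-one}, is that $\sigma_\otimes$ occurs as a quotient of $\mathbf{r}_{\alpha_\sigma}(\sigma)$, so $m(\sigma_\otimes,\mathbf{r}_{\alpha_\sigma}(\sigma)) \geq 1$. Applying the exact Jacquet functor $\mathbf{r}_{\alpha_\sigma}$ to a Jordan--H\"older filtration of $\pi_1 \times \pi_2$ then gives
\[
m(\sigma,\pi_1\times\pi_2) \;\leq\; m(\sigma,\pi_1\times\pi_2)\cdot m(\sigma_\otimes,\mathbf{r}_{\alpha_\sigma}(\sigma)) \;\leq\; m(\sigma_\otimes,\mathbf{r}_{\alpha_\sigma}(\pi_1\times\pi_2)).
\]
Proposition \ref{mult-one} bounds the right-hand side by $1$, completing the argument.

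I do not expect any serious obstacle at this stage: the substantive content has already been absorbed into Proposition \ref{mult-one} (with the pairwise multiplicity analysis in Lemma \ref{mul-lem} and the input from \cite{LapidKret} that ensures all Jacquet factors of a ladder are again ladders). The present corollary is effectively a bookkeeping step that converts the refined ``leftmost-block'' Jacquet-module multiplicity bound into the desired global multiplicity-one statement in $\mathcal{R}$.
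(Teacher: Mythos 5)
Your proposal is correct and follows essentially the same route as the paper: reduce to a common supercuspidal line, use that $\sigma_\otimes$ is a quotient of $\mathbf{r}_{\alpha_\sigma}(\sigma)$ together with exactness of the Jacquet functor to transfer the multiplicity of $\sigma$ in $\pi_1\times\pi_2$ into a multiplicity of $\sigma_\otimes$ in $\mathbf{r}_{\alpha_\sigma}(\pi_1\times\pi_2)$, and conclude by Proposition \ref{mult-one}. The only cosmetic difference is that the paper phrases this as a proof by contradiction, while you give the same bound directly.
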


\begin{proof}
Suppose $\pi_1\in \irr_{[\rho_1]}$ and $\pi_2\in \irr_{[\rho_2]}$. If $\rho_1\not\in \mathbb{Z}_{[\rho_2]}$ then $\pi_1\times \pi_2$ is irreducible and there is nothing to prove. Therefore we are free to assume $\pi_1,\pi_2\in \irr_{[\rho]}$. 

Now we assume the contrary, that is, there exists $\sigma\in \irr_{[\rho]}$ with $m(\sigma,\,\pi_1\times\pi_2)\geq 2$. Since $m(\sigma_\otimes,\,\mathbf{r}_{\alpha_\sigma} (\sigma))>0$, that would mean that $m(\sigma_\otimes,\,\mathbf{r}_{\alpha_\sigma}(\pi_1\times\pi_2))\geq 2$. This contradicts Proposition \ref{mult-one}.

\end{proof}

\bibliographystyle{abbrv}
\bibliography{propo2}{}

\end{document}